
\documentclass[draft,11pt]{amsart}
\usepackage{amssymb}

\textwidth = 453pt
\textheight = 630pt
\hoffset = -50pt
\voffset = -35pt
\marginparwidth = 58pt

\theoremstyle{plain}

\theoremstyle{plain}
\newtheorem{thm}{Theorem}[section]

\newtheorem{defn}[thm]{Definition}

\newtheorem{remark}[thm]{Remark}

\newcommand{\e}{\varepsilon}
\newcommand{\T}{\mathcal T}
\newcommand{\B}{\mathcal B}

\newcommand{\R}{\mathcal R}

\newcommand{\Span}{{\rm Span}\,}
\newcommand{\N}{\mathbb{N}}

\newcommand{\Q}{\mathbb{Q}}

\def\proj{\mbox{proj}}
\def\e{\varepsilon}
\def\N{\mathbb N}
\def\B{\mathcal B}

\def\L{\mathcal L}
\def\A{\mathcal A}
\def\K{\mathcal K}
\def\S{\mathcal S}
\def\B{\mathcal B}
\def\Y{\mathcal Y}
\def\SS{\mathcal{SS}}
\def\SC{\mathcal{SC}}

\begin{document}
\baselineskip 18pt

\title[Descriptive Set Theoretic Methods]{Descriptive Set Theoretic Methods Applied to
Strictly Singular and Strictly Cosingular Operators}

\author[George~Androulakis]{George~Androulakis}
\address{Department of Mathematics,
          University of South Carolina,
          Columbia, SC 29208, USA}
\email{giorgis@math.sc.edu}

\author[Kevin~Beanland]{Kevin~Beanland}
\address{Department of Mathematics,
    Virginia Commonwealth University,
    Richmond, VA 23220, USA}
\email{kbeanland@vcu.edu}

\keywords{Strictly singular and cosingular operators, Schreier families, descriptive set theory}

\subjclass{47B07, 47A15}

\date{}

\begin{abstract}
The class of strictly singular operators originating from the dual of a separable Banach space is written as an increasing union of 
$\omega_1$ subclasses which are defined using the Schreier sets. A question of J.~Diestel, of whether a similar result can 
be stated for strictly cosingular operators, is studied.
\end{abstract}

\maketitle

\section{Introduction} \label{section1}

The main results of this paper use descriptive set theory to demonstrate that a class of operators
can be written as a union of $\omega_1$ many subclasses or as an intersection of $\omega_1$ many
superclasses. The Schreier classes $(S_\xi)_{\xi < \omega_1}$ which were introduced by D.~Alspach
and S.A.~Argyros (\cite{AA}) play an important role in defining these subclasses or superclasses.
Other instances where certain notions where quantified by using the Schreier families $(S_\xi)_{\xi < \omega_1}$,
are the $S_\xi$-unconditional basic sequences (\cite{F}, \cite{T}), $S_\xi$-convex combinations of sequences in Banach spaces
(\cite[page 1054]{AGR}), $S_\xi$-spreading models (\cite[page 1057]{AGR}), $\xi$-convergent sequences in Banach spaces (\cite[page 1054]{AGR}),
$\xi$-Dunford-Pettis property (\cite[page 1059]{AGR}), $S_\xi$ strictly singular operators and
$S_\xi$-hereditarily indecomposable Banach spaces (\cite{ADST}).

There are two main parts in this article. The first is an extension of a result on strictly singular operators by the first named author,
P.~Dodos, G.~Sirotkin and V.~Troitsky, (\cite{ADST}).
The classes of $S_\xi$ strictly singular operators are introduced in \cite{ADST} for $1 \leq \xi < \omega_1$. These are increasing
subclasses of the class of strictly singular operators between two fixed Banach spaces. It is proved in \cite{ADST} that
the class of strictly singular operators between two fixed separable Banach spaces is equal to the union
(for $1 \leq \xi < \omega_1$) of the classes of $S_\xi$ strictly singular operators between these spaces (Theorem~\ref{Thm:ADST}). 
The first main result of the present article states that the same holds if we merely assume that the domain space has a separable predual
(Theorem~\ref{Thm:SS}).

The second main part of this article (section~\ref{section3}) gives a partial answer to a question of J.~Diestel \cite{D}.
The question asks whether the class of strictly cosingular operators between two fixed Banach spaces can be quantified by using the Schreier 
families $(S_\xi)_{\xi < \omega_1}$
and whether the new defined classes can be used to retrieve the class of strictly cosingular operators. In other words, the question
asks whether there exists a result for strictly cosingular operators which is similar to Theorems~\ref{Thm:ADST} and \ref{Thm:SS}.
We answer this question under additional assumptions on the range space  (Theorem~\ref{Thm:Kevin}). The assumptions on the range space  
can be reduced if we work with a new class of operators whose definition is similar to the definition of strictly cosingular operators:
Given a Banach space $Y$ and a family $({\mathcal A}_i)_{i \in I}$ of normalized basic sequences of the dual space $Y^*$, we define the
notion of $({\mathcal A}_i)_{i \in I}$ strictly cosingular operators from a Banach space $X$ to $Y$ 
(their class is denoted by $(\A_i)_{i \in I}-\SC(X,Y)$).
These are defined in a similar manner to strictly cosingular operators, and for some choices of $({\mathcal A}_i)_{i \in I}$, they
coincide with the class of strictly cosingular operators. Also for $1\leq \xi < \omega_1$ we define the class of
$({\mathcal A}_i)_{i \in I}$-$S_\xi$ strictly cosingular operators from $X$ to $Y$ (their class is denoted by $(\A_i)_{i \in I}$-$\SC_\xi (X,Y)$).
These are decreasing classes of operators containing the class $(\A_i)_{i \in I}$-$\SC(X,Y)$. We prove that if $Y$ is separable 
and $\A_i$ is analytic for all $i \in I$ then the class
$(\A_i)_{i \in I}$-$\SC(X,Y)$ is equal to the intersection of $(\A_i)_{i \in I}$-$\SC_\xi (X,Y)$ for $1 \leq \xi < \omega_1$ (Theorem~\ref{Thm:SC}).

\section{Strictly singular operators}

We start by recalling some standard terminology and facts from descriptive set theory and Banach spaces. More information can be found in either \cite{K} or \cite{AGR}.
For a non-empty set $X$ let $X^{< \N}$ denote the set containing all finite sequences of elements of $X$. If $X$ is a topological
space then $X^{<\N}$ is a topological space as a direct sum of the spaces $X^n$ for all $n \in \N$.
If $X$ is a set then a tree on $X$ is a subset $\T$ of $X^{<\N}$ such that if $(x_1, x_2, \ldots , x_m) \in \T$ (for some
$m \in \N$ and $x_1, \ldots , x_m \in X$) then $(x_1, x_2, \ldots , x_n) \in \T$ for all $1 \leq n < m$. If $\T$ is a tree on
a set $X$ then an infinite branch of $\T$ is an infinite sequence $(x_n)_{n \in \N}$ of elements of $\T$ such that
$(x_1, \ldots x_n) \in \T$ for all $n \in \N$. A tree is called well founded if it does not contain infinite branches.
If $\T$ is a tree on $X$ then the $\alpha$ derivative of $\T$, $\T^{( \alpha )}$, can be defined for any $\alpha < \omega_1$ after successively applying
``$\alpha$ many trimmings of final nodes of $\T$'' (for the precise definition see either \cite[pages 1010-1011]{AGR} or \cite[page 11]{K}). 
The height of $\T$, $h(\T)$, is the least ordinal $\alpha< \omega_1$ such that $\T^{(\alpha )} = \emptyset$, if such ordinals exist, else we set $h(\T)= \omega_1$.
The following is well known (\cite[Theorem I.1.4]{AGR}).

\begin{thm} \label{pwf}
Let $X$ be a Polish space, and $\T$ be an analytic well founded tree on $X$.  Then $ h (\T) < \omega_1$.
\end{thm}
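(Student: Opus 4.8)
The plan is to recast the statement about the height of $\T$ as a statement about the \emph{rank of a well-founded relation}, and then invoke a boundedness principle of Kunen--Martin type, namely that an analytic well-founded relation on a Polish space has rank $<\omega_1$. First I would note that $X^{<\N}$ is Polish, being a countable topological sum of the Polish spaces $X^n$, so that $\T$ sits as an analytic subset of a Polish space. On $X^{<\N}$ define a relation $\prec$ by declaring $t\prec s$ exactly when $s,t\in\T$ and $s$ is a proper initial segment of $t$.

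Three points then need checking. (i) The relation $\prec$ is analytic, since it is the intersection of $\T\times\T$ with the Borel set $\{(s,t):s\subsetneq t\}$, and the analytic sets are closed under such operations. (ii) The relation $\prec$ is well founded: an infinite $\prec$-decreasing run $s_0\succ s_1\succ\cdots$ is exactly a chain $s_0\subsetneq s_1\subsetneq\cdots$ of nodes of $\T$, i.e. an infinite branch, and $\T$ has none by hypothesis. (iii) The rank of $\prec$ agrees with $h(\T)$ up to the usual additive constant: the $\prec$-rank of a node $s$ coincides with its rank $\rho_\T(s)$ in the derivative analysis, because the supremum of $\rho+1$ over the immediate successors of $s$ already dominates the supremum over all successors, and $h(\T)=\rho_\T(\emptyset)+1$ when $\emptyset\in\T$. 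Thus $h(\T)<\omega_1$ as soon as the rank of $\prec$ is countable.

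It then remains to establish the boundedness statement, which is where the genuine work lies. Fixing an analytic representation $t\prec s\iff\exists\sigma\in\N^\N\,\bigl((s,t,\sigma)\in F\bigr)$ with $F$ closed, I would form the tree of \emph{witnessed descending sequences}: the tree $W$ whose nodes are finite sequences $\bigl((s_0,\sigma_0{\restriction}k),\dots,(s_{k-1},\sigma_{k-1}{\restriction}k)\bigr)$ recording a $\prec$-decreasing run $s_0\succ\cdots\succ s_{k-1}$ together with finite approximations to witnesses $\sigma_i$ placing each step in $F$. Well-foundedness of $\prec$ forces $W$ to have no infinite branch, since such a branch would assemble into an infinite $\prec$-decreasing sequence with genuine witnesses in the closed set $F$. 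After coding its countably supported structure, $W$ may be regarded as a tree on $\N$; hence $h(W)<\omega_1$ \emph{automatically}, being a countable object. A monotonicity computation, by transfinite induction on $\rho_\T$, then shows $\rho_\T(s)$ is at most the rank of the corresponding node of $W$, so the rank of $\prec$ is bounded by $h(W)<\omega_1$.

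The main obstacle is this last step: one must arrange $W$ so that (a) it is genuinely codable as a tree on the \emph{countable} set $\N$, which is precisely what makes its height automatically countable, and (b) the rank comparison $\rho_\T(s)\le\rho_W(\cdot)$ survives the bookkeeping of the witnesses $\sigma_i$. Once the coding into $\N$ is set up correctly the inequality is a routine induction; all the conceptual content is in the passage from the uncountable tree $\T$ on $X$ to the countable, manifestly countably-ranked tree $W$. Alternatively, one may simply quote the $\Sigma^1_1$-boundedness theorem for the canonical $\Pi^1_1$-rank on the space of well-founded trees and feed in the relation $\prec$ directly, which is the route implicit in the cited reference.
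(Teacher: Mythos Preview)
The paper does not give its own proof of this theorem; it is simply quoted as well known with a reference to \cite[Theorem~I.1.4]{AGR}. Your outline is the standard Kunen--Martin argument and is correct as a sketch: the passage to an analytic well-founded relation $\prec$ on the Polish space $X^{<\N}$, the tree $W$ of witnessed descending chains, and the inductive rank comparison are exactly the right ingredients. The point you single out as the main obstacle---coding $W$ as a tree on the \emph{countable} alphabet $\N$ despite the $s_i$ living in the uncountable space $X^{<\N}$---is indeed where the content lies; the customary device is to replace each pair $(s_i,s_{i+1})\in{\prec}$ by a code in $\N^\N$ via a continuous surjection $\N^\N\twoheadrightarrow{\prec}$, after which $W$ is genuinely a tree on $\N$ and your rank inequality goes through by induction. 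So your proposal is sound and aligned with the cited reference, but there is no proof in the present paper to compare it against.
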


For $1 \leq \xi < \omega_1$,
the Schreier family, $S_\xi$ was introduced in \cite{AA}, (see also \cite[page 1038]{AGR}), and contains certain finite subsets of $\N$.
Let $\L(X,Y)$ denote the space of bounded linear operators from the Banach space $X$
to the Banach space $Y$. 
An operator $T \in \L(X,Y)$ is strictly singular, (denoted $T \in \SS(X,Y)$), if the restriction of $T$ to any infinite dimensional
subspace of $X$ is not an isomorphism.  In \cite{ADST}, the notion of $\S_\xi$ strictly singular operator is introduced (for $1\leq \xi < \omega_1$)
as follows. If $X,Y$ are Banach spaces, $T \in \L(X,Y)$ and $1 \leq \xi < \omega_1$, $T$ is $\S_\xi$ strictly singular
(denoted by $T \in \SS_\xi(X,Y)$) if for every $\e>0$ and every basic sequence $(x_n)$ in $X$ there exists a set $F \in \S_\xi$  and 
$x \in \Span(x_n)_{n \in F}$ such that $\|T x \| < \e \| x \|$. Thus $\SS_\xi (X,Y)$ is a subset of $\SS (X,Y)$. Also, (\cite{ADST}),
$\SS_\xi (X,Y) \subseteq \SS_\zeta (X,Y)$ if $1 \leq \xi < \zeta < \omega_1$. It is then proved in \cite{ADST} that:

\begin{thm} \label{Thm:ADST}
Let $X$, $Y$ be separable Banach spaces.  Then $\SS (X,Y) = \bigcup _{\xi < \omega_1} \SS_\xi  (X, Y)$.
\end{thm}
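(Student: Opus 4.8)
The plan is to prove the two inclusions separately. The inclusion $\bigcup_{\xi<\omega_1}\SS_\xi(X,Y)\subseteq\SS(X,Y)$ is immediate, since each $\SS_\xi(X,Y)\subseteq\SS(X,Y)$ as already noted in the excerpt. So the whole content is to show that every $T\in\SS(X,Y)$ lies in $\SS_\xi(X,Y)$ for some countable $\xi$, and the strategy is to read off such a $\xi$ from a countable ordinal bound produced by Theorem~\ref{pwf}.

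First I would reformulate the defining condition combinatorially. Given $T$, a normalized basic sequence $\bar x=(x_n)$ in $X$ and $\e>0$, let $\F_\e(\bar x)$ be the family of finite sets $F\subseteq\N$ such that $\|Tx\|\ge\e\|x\|$ for every $x\in\Span(x_n)_{n\in F}$. This family is hereditary (passing to a subset shrinks the span), and $T$ fails to be $S_\xi$ strictly singular on account of the pair $(\e,\bar x)$ exactly when $S_\xi\subseteq\F_\e(\bar x)$. Moreover strict singularity forces $\F_\e(\bar x)$ to contain no infinite set all of whose finite subsets belong to it, since such a set would make $T$ bounded below by $\e$ on an infinite dimensional subspace; hence, viewed as a tree of increasing sequences, $\F_\e(\bar x)$ is well founded and carries a countable index $o(\F_\e(\bar x))$. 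It therefore suffices to bound these indices uniformly in $\bar x$: if for each rational $\e>0$ one has $\beta(\e):=\sup_{\bar x}o(\F_\e(\bar x))<\omega_1$, then choosing $\xi_k<\omega_1$ with $o(S_{\xi_k})>\beta(1/k)$ (possible because the indices of the Schreier families are unbounded in $\omega_1$) and setting $\xi=\sup_k\xi_k<\omega_1$, the monotonicity of the index under inclusion together with the monotonicity of $\F_\e(\bar x)$ in $\e$ yields $S_\xi\not\subseteq\F_\e(\bar x)$ for every $\bar x$ and every $\e>0$, that is, $T\in\SS_\xi(X,Y)$.

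The descriptive set theoretic heart is the uniform bound $\beta(\e)<\omega_1$. For rationals $\e,\delta>0$ I would introduce, on the Polish space $S_X^{<\N}$ of finite sequences from the unit sphere $S_X$ of $X$, the tree
$$\T_{\e,\delta}=\Bigl\{(y_1,\dots,y_m)\in S_X^{<\N}:\ \|Ty\|\ge\e\|y\|\ \ \forall y\in\Span(y_i)_{i\le m},\ \ \mathrm{dist}\bigl(y_{i+1},\Span(y_1,\dots,y_i)\bigr)\ge\delta\ \ \forall i<m\Bigr\}.$$
Both defining conditions are closed (the first by compactness of the unit sphere of each finite dimensional span), so $\T_{\e,\delta}$ is Borel, hence analytic. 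It is well founded: an infinite branch would be a $\delta$-separated sequence of unit vectors, whose closed span is thus infinite dimensional and on which $T$ is bounded below by $\e$, contradicting $T\in\SS(X,Y)$. By Theorem~\ref{pwf} its height $\rho(\e,\delta):=h(\T_{\e,\delta})$ is countable. Finally, a normalized basic sequence $\bar x$ with basis constant $K$ has its unit vectors $1/(2K)$-separated from the spans of their predecessors, so for any $\delta\le 1/(2K)$ the order preserving map $\{n_1<\dots<n_m\}\mapsto(x_{n_1},\dots,x_{n_m})$ carries $\F_\e(\bar x)$ into $\T_{\e,\delta}$; hence $o(\F_\e(\bar x))\le\rho(\e,1/j)$ for all $j\ge 2K$, and $\beta(\e)=\sup_j\rho(\e,1/j)$ is a countable supremum of countable ordinals, so $\beta(\e)<\omega_1$.

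The main obstacle is exactly this uniformity over all basic sequences, whose basis constants are unbounded: a single tree built from sequences of bounded basis constant cannot receive the images of all the families $\F_\e(\bar x)$. The resolution sketched above is to split the bound over the countably many separation parameters $\delta=1/j$, each handled by its own analytic well founded tree, and then to use that a countable supremum of countable ordinals remains below $\omega_1$. The remaining points are routine: checking that the two conditions defining $\T_{\e,\delta}$ are closed (so the trees are analytic and Theorem~\ref{pwf} applies), and recording the standard facts that the tree index is monotone under inclusion of hereditary families and that $o(S_\xi)$ is unbounded in $\omega_1$.
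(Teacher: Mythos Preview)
Your proposal is correct. Note, however, that the paper does not itself prove Theorem~\ref{Thm:ADST}: it is quoted from \cite{ADST}. The natural point of comparison is the paper's proof of its refinement, Theorem~\ref{Thm:SS}.

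Both your argument and the paper's proof of Theorem~\ref{Thm:SS} rest on the boundedness theorem for analytic well founded trees (Theorem~\ref{pwf}), but the implementations differ. You work directly: for each pair of rational parameters $(\e,\delta)$ you build a closed tree $\T_{\e,\delta}$ on the Polish space $S_X^{<\N}$, the $\delta$-separation condition being what guarantees that an infinite branch spans an infinite dimensional subspace and hence contradicts strict singularity; you then extract a countable height from each such tree and take a countable supremum over the parameters to obtain a uniform bound valid for all basic sequences regardless of basis constant. The paper instead argues by contrapositive and packages everything into a \emph{single} Borel tree on the product $\N\times\N\times\Y$: each node carries simultaneously the inverse isomorphism constant $m$, the finite index set $(\ell_1,\dots,\ell_n)$, and the entire basic sequence $(y_k^*)_k$. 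This sidesteps both your $\delta$-separation device and the countable-supremum step, at the price of working on a larger Polish space and having to verify that the set $\B$ of normalized basic sequences is Borel in the relevant topology. Your route is more elementary and closer to classical index arguments; the paper's single-tree encoding is cleaner once set up and is what allows the extension to the case where the domain is a (possibly non-separable) dual of a separable space, since there one must use the weak$^*$ topology on $Ba(Y^*)$ rather than the norm topology on a sphere.
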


Here we give the following refinement of Theorem~\ref{Thm:ADST}.

\begin{thm} \label{Thm:SS}
Let $Y$ and $Z$ be Banach spaces such that $Y$ is separable and let $Y^*$ denote the dual of $Y$. 
Then $\SS (Y^* , Z) = \bigcup_{\xi < \omega_1} \SS_\xi (Y^*,Z)$.
\end{thm}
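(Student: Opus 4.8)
The inclusion $\bigcup_{\xi<\omega_1}\SS_\xi(Y^*,Z)\subseteq\SS(Y^*,Z)$ is immediate, since each $\SS_\xi(Y^*,Z)\subseteq\SS(Y^*,Z)$. The content is the reverse inclusion: given $T\in\SS(Y^*,Z)$ I must produce a single countable ordinal $\xi$ with $T\in\SS_\xi(Y^*,Z)$. By homogeneity it suffices to test the defining condition on normalized basic sequences, and by scaling I may take their terms in $B_{Y^*}$. The point of departure from Theorem~\ref{Thm:ADST} is that the domain $Y^*$ need not be norm separable, so the collection of all normalized basic sequences carries no obvious Polish structure. Separability of $Y$ repairs this: by Banach--Alaoglu $(B_{Y^*},w^*)$ is weak*-compact, and since $Y$ is separable it is weak*-metrizable, hence compact metrizable, i.e.\ Polish. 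Fixing a dense sequence $(y_j)$ in $B_Y$ identifies a sequence $(x_n)$ in $B_{Y^*}$ with the array $(\langle x_n,y_j\rangle)_{n,j}$, so the normalized basic sequences of a prescribed basis constant form a Polish coding space; since $\|x\|=\sup_j|\langle x,y_j\rangle|$ is weak*-lower semicontinuous, being normalized and basic are Borel constraints on the code.

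My plan is to run the tree/derivative machinery of \cite{ADST} inside this weak* coding. For each $m\in\N$ I would build a tree $\Tm$ whose nodes are finite initial segments of coded normalized basic sequences, recording the Schreier-admissible block supports on which $T$ is $1/m$-bounded below; an infinite branch then yields an infinite-dimensional subspace of $Y^*$ on which $T$ is bounded below by $1/m$, which strict singularity forbids, so each $\Tm$ is well founded. If moreover each $\Tm$ is analytic, Theorem~\ref{pwf} gives $h(\Tm)<\omega_1$, and a countable supremum over $m$ fed into the Schreier-index bookkeeping of \cite{ADST} (the height of $\Tm$ controls at which level $S_\xi$ a small-norm block can always be found) yields one $\xi<\omega_1$ with $T\in\SS_\xi(Y^*,Z)$. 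To legitimize the convenient reduction to weak*-null sequences in the coding, I would, given an arbitrary normalized basic sequence, pass to a weak*-convergent subsequence (available by metrizability of $(B_{Y^*},w^*)$) and replace it by the weak*-null sequence of successive differences, checking that a small-norm $S_\xi$-block for the differences transfers back to the original at the cost of an absorbable shift in the Schreier index.

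The main obstacle is precisely the analyticity of $\Tm$, and it is genuine rather than routine. The bounded-below condition at a node involves $\|Tx\|_Z$ for blocks $x$, and although $T$ is norm continuous it is typically not weak*-continuous: the functionals $z\mapsto\langle f,Tz\rangle=\langle T^*f,z\rangle$ are given by elements $T^*f\in Y^{**}$ which, when they fall outside $Y$, need not be weak*-Borel on $B_{Y^*}$ (already for $Y=\ell_1$ a Banach limit is a non weak*-Borel element of $Y^{**}$). Hence $z\mapsto\|Tz\|_Z$ is in general not weak*-Borel, and the naive tree built from $\|T\cdot\|_Z$ is not analytic. The asymmetry I would exploit is that the two uses of the norm pull in opposite directions: for well-foundedness I need only a \emph{lower} bound, and $\|Tx\|_Z\ge\sup_{f\in B_{Z^*}\cap(T^*)^{-1}(Y)}|\langle T^*f,x\rangle|$ is a supremum of genuinely weak*-continuous functions, hence weak*-lower semicontinuous, which places the bounded-below nodes in a weak*-closed set and renders $\Tm$ analytic while preserving well-foundedness; for the conclusion $T\in\SS_\xi$ I instead need the genuine smallness of $\|Tx\|_Z$, which I would recover not from the weak* structure but from the finite-dimensional linear algebra at each node together with strict singularity, transferring smallness from a fixed weak*-dense skeleton of $B_{Y^*}$ to arbitrary blocks by a gliding-hump perturbation controlled by the basis constant. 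Reconciling these two formulations so that the \emph{same} tree is at once analytic, well founded, and sharp enough to read off $\SS_\xi$ membership is the technical heart of the matter; once it is in place, Theorem~\ref{pwf} closes the argument exactly as above.
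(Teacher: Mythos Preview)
Your overall architecture is exactly the paper's: endow $B_{Y^*}$ with the weak* topology (Polish since $Y$ is separable), show that the set $\B$ of normalized basic sequences is Borel in $\Y=(B_{Y^*},w^*)^{\N}$, build a tree on $\N\times\N\times\Y$ whose nodes $(m,\ell_i,(y_k^*))_{i=1}^n$ record that $\|T(\sum a_i y^*_{\ell_i})\|>\tfrac1m\|\sum a_i y^*_{\ell_i}\|$ for all rational $(a_i)$, and then invoke Theorem~\ref{pwf}. The paper runs this contrapositively (assume $T\notin\SS_\xi$ for every $\xi$, force height $\omega_1$, extract a branch), but that is the same argument as yours. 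The paper does \emph{not} pass to weak*-null difference sequences; it works directly with $\B$.

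Where your proposal diverges is exactly at the point you flag as the obstacle. The paper disposes of Borelness of the tree in a single parenthetical: ``$\T$ is a Borel subtree of $\R$ (since $T$ is $\|\cdot\|$-weak$^*$ continuous and $\|\cdot\|$ is weak$^*$ lower semicontinuous).'' It offers no further argument, and in particular does not engage with the issue you raise---that for a general bounded $T:Y^*\to Z$ the map $y^*\mapsto\|Ty^*\|$ need not be weak*-Borel, because $T^*z^*\in Y^{**}$ may lie outside $Y$ and hence fail to be weak*-measurable (your ultrafilter/Banach-limit objection on $\ell_\infty=(\ell_1)^*$ is on point). Under any reading of ``$\|\cdot\|$-weak$^*$ continuous'' that I can make sense of for an arbitrary target space $Z$, this justification does not establish the claim. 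So rather than having missed something, you appear to have located a genuine gap in the paper's proof.

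Your proposed repair---replacing $\|Tx\|$ by the weak*-lsc minorant $\sup\{\,|\langle T^*f,x\rangle|:f\in B_{Z^*},\,T^*f\in Y\,\}$---does make the tree analytic and keeps it well founded (a branch still forces $\|Tx\|\ge\tfrac1m\|x\|$). The difficulty you correctly isolate is the converse step: from $T\notin\SS_\xi$ one only gets $\|Tx\|\ge\varepsilon\|x\|$ on $S_\xi$-blocks, which says nothing about the minorant when $(T^*)^{-1}(Y)\cap B_{Z^*}$ is poor (it may even be $\{0\}$). The ``gliding-hump transfer from a weak*-dense skeleton'' you sketch does not obviously manufacture such functionals, and the paper provides no alternative mechanism. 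In short: your plan matches the paper's, your diagnosis of the sore point is sharper than the paper's, and neither your proposal nor the paper, as written, closes that gap.
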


\begin{proof}
Let $Y$ and $Z$ be as in the statement and assume that $T \in \bigcap_{\xi < \omega_1} (\SS_\xi(Y^*,Z))^c$.  Let $\B$ denote the set of normalized basic
sequences in $Y^*$, $Ba(Y^*)$ denote the unit ball of $Y^*$ and $S(Y^*)$ denote the unit sphere of $Y^*$ (i.e. the vectors of norm equal to $1$). 
Let $\Y = (Ba(Y^*), \mbox{ weak}^*\mbox{ topology})^{\N}$ and ${\mathcal S}= (S(Y^*), \mbox{ weak}^*\mbox{ topology})^{\N}$.
Then $\Y$ is a Polish space, $(S(Y^*), \mbox{ weak}^*\mbox{ topology})$ is a Borel subset of $(Ba (Y^*), \mbox{ weak }^*\mbox{ topology})$
and thus ${\mathcal S}$ is a Borel subset of $\Y$.  

We claim that $\B$ is a Borel subset of ${\mathcal S}$ and thus a Borel subset of $\Y$. Indeed,
$\B = \bigcup_{k \in \N} \B_k$ where $\B_k$ denote the set of normalized basic sequences with basis constant at most
$k$. To see that $\B_k$ is a Borel set, observe that for $(y_n^*) \in {\mathcal S}$,
$$
(y_n^*)_n \in \B_k \Leftrightarrow  ~\forall N<M \text{ in }\N,~ \forall (a_n) \in \Q^{<\N},~ \forall r \in \Q^+ , 
\| \sum_{n=1}^M a_n y_n^* \| > r ~\mbox{or}~ \|\sum_{n=1}^N a_n y^*_n \| \leq kr.
$$
\noindent For fixed $N<M$ in $\N$, $(a_n)_n \in \Q^{<\N}$ and $r \in \Q^+$, the sets
$\{(y_n^*)_n \in \Y : \| \sum_{n=1}^M a_n y_n^* \| > r\} $
and  $\{(y_n^*)_n \in \Y : \|\sum_{n=1}^N a_n y^*_n \| \leq kr\} $ are open and closed, respectively in $\Y$.
Thus $\B_k$ is Borel and so $\B$ is a
Borel subset of $\S$. 

Let $\K$ be the Polish space  $\N \times \N \times \Y$.  Recall, that $\K^{<\N}$ denotes the direct sum of the 
spaces $\K^n$ (for $n \in \N$) and a tree on $\K$ will be a subset of $\K^{<\N}$.  Define a tree $\R$ on $\K$
to be the following set:
$$
\{ (m_i, \ell_i , (y^*_{i,k})_k )_{i=1}^n \in \K^{<\N} :  m_1= \cdots = m_n, (\ell_1 < \cdots < \ell_n) \in \N^{<\N}, 
(y^*_{1,k})_k=  \cdots =  (y^*_{n,k})_k \in \B \} .
$$
\noindent
Note that a generic element of $\R$ will look like $(m,\ell_i, (y^*_k)_k)_{i=1}^n$ where $m,n \in \N$, 
$(\ell_1< \ell_2< \cdots < \ell_n) \in \N^n$ and $(y_k^*)_k \in \B$.  Obviously  $\R$ 
is a closed subset of $( \N \times \N \times \B )^{< \N }$ thus it is a
Borel subset of $\K^{<\N}$, (since $(\N \times \N \times \B)^{<\N}$ is a Borel subset of $\K^{<\N}$).  
For each $m \in \N$, define the following subtree of $\R$.
$$
\T= \{ (m,\ell_i, (y^*_{k})_k )_{i=1}^n \in \R :   ~\forall (a_i)_i \in \Q^{<\N}, 
\| T  \sum_{i=1}^n a_i y_{\ell_i}\| > \frac{1}{m}  \|  \sum_{i=1}^n a_i y_{\ell_i}\|  \} . 
$$
\noindent $\T$ is a Borel subtree of $\R$, (since $T$ is $\| \cdot \| $-weak$^*$ continuous and $\| \cdot \|$ is weak$^*$ lower semicontinuous).  
Since $T \in \bigcap_{\xi < \omega_1} (\SS_\xi(Y^*,Z))^c$, for all $\xi < \omega_1$ there is 
an $m \in \N$ and $(y^*_k)_k \in \B$ such that  for all $(\ell_1, \ldots , \ell_n) \in S_\xi$ we have that  $(m,\ell_i,(y_k^*)_k)_{i=1}^n \in \T$.  
The subtree $\T_{(y_k^*)_k,m}$ of $\T$ 
containing nodes of the form $(m,\ell_i,(y_k^*)_k)_{i=1}^n$ with  $(\ell_1, \ldots , \ell_n) \in S_\xi$ is order isomorphic to $S_\xi$.
Thus for each $\xi < \omega_1$ the height of $\T$ is greater than or equal to $h(S_\xi)=\omega^\xi$.  Whence $h(\T)=\omega_1$.  By 
Theorem~\ref{pwf}, $\T$ is not well founded and thus there is an $m \in \N$, $(y^*_k)_k \in \B$ and $(\ell_i)_{i=1}^\infty$ such that for all $ n \in \N$
$(m,\ell_i,(y_k^*)_k)_{i=1}^n \in \T$.  This implies that $T|_{[y_{\ell_i}^*]_{i=1}^\infty}$ is an isomorphism and thus $T$ is not in $SS(Y^*,Z)$. 
\end{proof}

\section{Strictly cosingular operators} \label{section3}

For Banach spaces $X$ and $Y$ and an operator $T \in \L (X,Y)$,  A.~Pelczynski  defined in \cite{P1} that  $T$ is called
strictly cosingular
if for any subspace $Z$ of $Y$ of infinite codimension, the operator $Q_Z T$ is not onto, where $Q_Z$ is the
canonical quotient map from $Y$ to $Y/Z$. We denote by $\SC (X,Y)$ the set of all strictly cosingular operators from $X$ to $Y$.
Notice that by dualizing, $T \in \L (X,Y)$ is strictly cosingular if and only if for any
infinite dimensional weak$^*$-closed subspace $W$ of
$Y^*$, the restriction of $T^*$ on $W$, $T^*|_W$, is not an isomorphism.  Thus, in particular, if $T^* \in \SS (Y^*, X^*)$ then 
$T \in \SC (X,Y)$. The converse is in general false, as it can be seen from the inclusion operator from $c_0$ to $\ell_\infty$.
Pelczynski proved in \cite{P1} that this operator is strictly cosingular but its adjoint is a projection from the dual of $\ell_\infty$
to $\ell_1$ which fails to be strictly singular. For Banach spaces $X$ and $Y$ let $SS_*(X,Y)$ denote the set of all operators
$T \in \L (X,Y)$ such that $T^* \in \SS (Y^*, X^*)$. For every $\xi < \omega_1$ define  the classes 
$SS_{\xi, *}(X,Y)$ to contain all operators $T \in \L(X,Y)$ such that $T^* \in \SS_\xi (Y^*,X^*)$. Note that if $Y$ is separable
then by Theorem~\ref{Thm:SS}, $\SS_*(X,Y)= \bigcup\limits_{\xi < \omega_1}\SS_{\xi,*}(X,Y)$. Thus, if the range space $Y$ is separable 
and $\SC(X,Y)=\SS_*(X,Y)$ then we obtain an answer to J.~Diestel's question.

Recall that W.T.~Gowers answered in the negative, in \cite{G}, the question of whether every Banach space contains a boundedly complete 
basic sequence or an isomorph $c_0$. Consequently V.P.~Fonf characterized in \cite{Fo} the class ${\mathcal K}$ of Banach spaces that 
contain either a boundedly complete basic sequence or $c_0$. Let ${\mathcal K}_s$ denote the class of Banach spaces such that all of their
infinite dimensional closed subspaces contain a boundedly complete basic sequence or an isomorph of $c_0$. Many known Banach spaces
belong to ${\mathcal K}_s$. For example, since every space with an unconditional basis contains $c_0$, $\ell_1$ or a reflexive subspace, we obtain 
that ${\mathcal K}_s$ contains all Banach spaces which are saturated with unconditional basic sequences. Also by the results of 
W.B.~Johnson and H.P.~Rosenthal, in \cite[Theorem~IV.1.(ii)]{JR}, all separable dual spaces belong to ${\mathcal K}_s$.

\begin{thm} \label{Thm:Kevin}
Let $Y$ be separable and $Y^* \in {\mathcal K}_s$. Then for any Banach space $X$, $\SC(X,Y)= \SS_*(X,Y)$. Thus by Theorem~\ref{Thm:SS},
$\SC(X,Y) = \bigcup_{\xi < \omega_1} \SS_{\xi,*}(X,Y)$. 
\end{thm}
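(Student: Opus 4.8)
The plan is to prove only the nontrivial inclusion $\SC(X,Y) \subseteq \SS_*(X,Y)$, since the reverse inclusion $\SS_*(X,Y) \subseteq \SC(X,Y)$ is exactly the implication ``$T^* \in \SS(Y^*,X^*) \Rightarrow T \in \SC(X,Y)$'' already recorded via the dual characterization of strict cosingularity. I would argue by contraposition: assuming $T^* \notin \SS(Y^*,X^*)$, I want to produce an infinite dimensional \emph{weak$^*$-closed} subspace $W \subseteq Y^*$ on which $T^*$ is an isomorphism, for then the dual characterization gives $T \notin \SC(X,Y)$. Thus the whole point is to upgrade an ordinary norm-closed subspace on which $T^*$ is bounded below into a weak$^*$-closed one.

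So fix an infinite dimensional $V \subseteq Y^*$ with $T^*|_V$ bounded below. First I would pass to a weak$^*$-null sequence: since $Y$ is separable, $Ba(Y^*)$ is weak$^*$-metrizable, so any normalized basic sequence in $V$ has a weak$^*$-convergent subsequence, and replacing its terms by successive differences produces a seminormalized weak$^*$-null block basis $(v_n^*)$ with $[v_n^*] \subseteq V$ on which $T^*$ is still bounded below. Next I would invoke $Y^* \in \mathcal{K}_s$ on $[v_n^*]$: it yields a basic sequence that is either boundedly complete or equivalent to the $c_0$-basis, and after a routine perturbation (Bessaga--Pe\l czy\'nski selection) I may take it to be a block basis $(u_m)$ of $(v_n^*)$, hence again weak$^*$-null, with $[u_m] \subseteq V$ and the dichotomy preserved.

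The hard part is eliminating the $c_0$ alternative, and this is exactly where separability of $Y$ enters. By the Johnson--Rosenthal selection theorem, a weak$^*$-null seminormalized sequence in the dual of a separable space has a weak$^*$-basic subsequence; passing to such a subsequence of $(u_m)$, its weak$^*$-closed span $W$ satisfies $W = ({}^\perp W)^\perp \cong (Y/{}^\perp W)^*$, which is a \emph{separable} dual space. Since a separable dual space cannot contain an isomorph of $c_0$ (a dual containing $c_0$ contains $\ell_\infty$, by Bessaga--Pe\l czy\'nski), the subspace $[u_m] \subseteq W$ is not isomorphic to $c_0$, which rules out that branch. Hence $(u_m)$ must be boundedly complete.

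Finally I would pass once more to a weak$^*$-basic subsequence $(u_{m_j})$, still boundedly complete. For a boundedly complete weak$^*$-basic sequence the weak$^*$-closed span equals the norm-closed span, because the uniformly bounded partial sums of any weak$^*$-convergent expansion must already converge in norm; thus $W := [u_{m_j}]$ is weak$^*$-closed. As $W \subseteq V$, the restriction $T^*|_W$ is an isomorphism, so $T \notin \SC(X,Y)$, completing the contraposition and giving $\SC(X,Y) = \SS_*(X,Y)$. The asserted decomposition $\SC(X,Y) = \bigcup_{\xi < \omega_1} \SS_{\xi,*}(X,Y)$ is then immediate from this equality together with the identity $\SS_*(X,Y) = \bigcup_{\xi<\omega_1} \SS_{\xi,*}(X,Y)$ recorded before the statement (an application of Theorem~\ref{Thm:SS}, using that $Y$ is separable). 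The single genuine obstacle is the weak$^*$-closed-span upgrade: it succeeds for boundedly complete sequences and fails for $c_0$, so the essential step is to show the hypotheses forbid the $c_0$ case.
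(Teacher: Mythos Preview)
Your overall framework---argue by contraposition, produce an infinite dimensional weak$^*$-closed subspace of $Y^*$ on which $T^*$ is bounded below, and split into cases via the ${\mathcal K}_s$-dichotomy---matches the paper exactly, and your treatment of the boundedly complete branch is essentially the same as the paper's (pass to weak$^*$-null differences, extract a weak$^*$-basic subsequence via Johnson--Rosenthal, and use that for a boundedly complete weak$^*$-basic sequence the weak$^*$ and norm closures of the span coincide).

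The genuine gap is in your handling of the $c_0$ alternative. You claim that the weak$^*$-closed span $W$ of your weak$^*$-basic subsequence is a \emph{separable} dual space. It is certainly the dual of a separable space, since $W \cong (Y/{}^\perp W)^*$ and $Y$ is separable, but $W$ itself need not be separable. The cleanest counterexample already lives inside your setup: take $Y=\ell_1$, so $Y^*=\ell_\infty$, and let $(u_m)=(e_m)$ be the unit vectors. This is a weak$^*$-null, weak$^*$-basic sequence equivalent to the $c_0$-basis, yet its weak$^*$-closed linear span is all of $\ell_\infty$, which is nonseparable and obviously contains $c_0$. So the inference ``$[u_m]$ sits inside a separable dual, hence cannot be isomorphic to $c_0$'' fails. (Read the other way, ``dual of a separable space cannot contain $c_0$'' is also false, for the same reason.) The $c_0$ branch therefore cannot be eliminated a priori; you have to deal with it.

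The paper handles this branch not by ruling it out but by exploiting it. Since $T^*$ preserves a copy of $c_0$ and the domain $Y^*$ is complemented in its bidual, Rosenthal's theorem (\cite[Theorem~1.3]{R}) yields a subspace $W\subseteq Y^*$ isomorphic to $\ell_\infty$ on which $T^*$ is an isomorphic embedding. Any reflexive subspace of this $W$ (say a copy of $\ell_2$) is automatically weak$^*$-closed in $Y^*$, and $T^*$ restricted there is the desired isomorphism. That application of Rosenthal's result is the missing ingredient in your argument.
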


\begin{proof}
Let $T \in \SC(X,Y)$ and suppose $T^*$ is not in $\SS(Y^*,X^*)$. Find a subspace $Z$ of $Y^*$ such $T^*|_{Z}$ is
an isomorphism. Our goal is to find an infinite dimensional weak$^*$ closed subspace preserved by $T^*$. 
This will contradict the fact that $T \in \SC(X,Y)$, and establish that $\SC(X,Y)= \SS_*(X,Y)$. The fact that 
$\SC(X,Y) = \bigcup_{\xi < \omega_1} \SS_{\xi,*}(X,Y)$ follows immediately from Theorem~\ref{Thm:SS} since $Y$ is separable.

Since $Y^*$ is in $\K_s$, we first assume that $Z$ contains a normalized boundedly complete basic sequence, call it $(y_n^*)$. Since $Y$
is separable, the weak$^*$ topology on the unit ball of $Y^*$ is metrizable, thus $(y_n^*)$ has a weak$^*$ convergent subsequence. 
Let $(z_n^*)$ be the difference sequence of that weak$^*$ convergent subsequence of $(y_n^*)$. Then $(z_n^*)$ is weak$^*$ null 
seminormalized boundedly complete basic sequence in $Z$. By \cite[Theorem III.1]{JR}, since $Y$ is separable,
$(z_n^*)$ has a further subsequence $(z_{k_n}^*)$ which is weak$^*$ basic (and boundedly complete). By \cite[Proposition II.1]{JR}, the weak$^*$ and
norm closure of the linear span of $(z_{k_n}^*)$ coincide and thus $Z$ contains an infinite dimensional weak$^*$ closed subspace.
The restriction of $T^*$ on that subspace is an isomorphism. 

In the second case we assume that $Z$ contains and isomorph of $c_0$. We recall that H.P.~Rosenthal proved in \cite[Theorem 1.3]{R} that if
an operator originates from a Banach space which is complemented in its second dual and preserves an isomorphic copy of $c_0$ then it preserves
an isomorphic copy of $\ell_\infty$. Since the dual Banach space $Y^*$ is complemented in its second dual, by the above result of Rosenthal, 
there exists a subspace $W$ of $Y^*$ which is isomorphic to $\ell_\infty$ such that the restriction of $T^*$ on $W$ is an isomorphic embedding.
Obviously $W$ contains a weak$^*$  closed subspace of $Y^*$ (consider, for example, a reflexive subspace of $W$). The restriction of $T^*$
on that weak$^*$ closed subspace is also an isomorphism.
\end{proof}

The assumption on $Y^*$ in Theorem~\ref{Thm:Kevin} can be eliminated if we work with the class of $(\A _i)_{i \in I}$ strictly cosingular
operators instead of the class of strictly cosingular operators. We now motivate the definition of $(\A _i)_{i \in I}$ strictly cosingular operators
by examining the definition of strictly cosingular operators. Let $X$, $Y$ be Banach spaces and $T \in \L (X,Y)$. Since every weak$^*$ closed subspace 
of $Y^*$ contains the weak* closed linear span
of some basic sequence we have that $T \in \SC(X,Y)$ if and only if for any basic sequence
$(y_n^*)_n$ in $Y^*$, $T^*|_{\widetilde{\Span}(y^*_n)}$ is not an isomorphism
(where $\Span (y_n^*)$ denotes the linear span of $(y_n^*)$ and $\widetilde{\Span} (y_n^*)$ denotes the weak$^*$ closure
of the linear span of $(y_n^*)$).
When attempting to define subclasses of strictly cosingular operators in a way similar to the definition of $S_\xi$ strictly singular
operators, we encounter an immediate problem.
Namely, whether or not $T^*|_{\widetilde{\Span}(y^*_n)}$ is an isomorphism is not characterized by whether or not
$T^*|_{\Span(y_n^*)_{n\in F}}$ is an
isomorphism with the same isomorphism constant for any finite subset $F$ of $\N$.  A natural way to compensate for this is
to consider complements of the finite sets. Of course,  $T^*|_{\widetilde{\Span}(y^*_n)}$ is not an isomorphism
for all basic sequences $(y_n^*)$ in $Y^*$, if and only if
for all basic sequence $(y_n^*)$ in $ Y^*$
and for every finite subset  $I$ of $\N$, $T^*|_{\widetilde{\Span}(y^*_n)_{n \not\in I}}$ is not an isomorphism.
A new problem arises: If for every basic sequence $(y_n^*)$ in $Y^*$ one examines the restriction of $T^*$ on
$\widetilde{\Span}(y^*_n)_{n \not\in I}$ where $I$ belong to a fixed class $\mathcal{I}$ of finite subset of $\N$
(e.g. $\mathcal{I}=S_\xi$ for some $\xi < \omega_1$), then the fact that we examine every basic sequence
$(y_n^*)_n$, makes the class $\mathcal{I}$ unimportant, since $(y_n^*)_{n \not\in I}$ is another basic sequence in $Y^*$.  
We fix this problem by fixing a family $({\mathcal A}_i)_{i \in I}$ of sets ${\mathcal A}_i$ of basic sequences in
$Y^*$ and examining whether, for every $i \in I$ there exists a basic sequence $(y_n^*) \in {\mathcal A}_i$ such that
for all finite subsets $F$ of positive integers, $T^*|_{\widetilde{\Span}(y_n^*)_{n \not \in F}}$
is not an isomorphism. Here is the precise definition:

\begin{defn}
Let $X$, $Y$ be Banach spaces and $T \in \L (X,Y)$. Let $I$ be an index set and for every $ i \in I$ let
${\mathcal A}_i$ be a set of basic sequences in the dual space $Y^*$ of $Y$. Fix $1 \leq \xi < \omega_1$.
\begin{enumerate}
\item We say that $T$ is $({\mathcal A}_i)_{i \in I}$ strictly cosingular, (denoted by
$T \in ({\mathcal A}_i)_{i \in I}$-$\SC (X,Y)$), if and only if for every $i \in I$ there exists
$(y_n^*)_n \in {\mathcal A}_i$ and an infinite subset $N$ of the positive integers such that for every
finite subset $F$ of $N$ we have that $T^*|_{\widetilde{\Span}(y_n^*)_{n \not \in F}}$ is not an isomorphism.
\item We say that $T$ is $({\mathcal A}_i)_{i \in I}$-$S_\xi$ strictly cosingular ,(denoted by
$T \in ({\mathcal A}_i)_{i \in I}$-$\SC_\xi(X,Y)$), if and only if for every $i \in I$ there exists
$(y_n^*)_n \in {\mathcal A}_i$ and an infinite subset $N$ of the positive integers such that for every
finite subset $F$ of $N$ with $F \in S_\xi$ we have that $T^*|_{\widetilde{\Span}(y_n^*)_{n \not \in F}}$ is
not an isomorphism.
\end{enumerate}
\end{defn}

First note that for Banach spaces $X$ and $Y$ and any families $({\mathcal A}_i)_{i \in I}$ of basic sequences of $Y^*$, 
we have $\SC (X,Y) \subseteq ({\mathcal A}_i)_{i \in I}$-$\SC(X,Y)$.
Also note that for a specific choice of $({\mathcal A}_i)_{i \in I}$, we have that 
$\SC (X,Y) = ({\mathcal A}_i)_{i \in I}$-$\SC(X,Y)$.
Indeed, consider the set ${\mathcal W}$ of all weak$^*$ closed infinite
dimensional subspace of $Y^*$, and for every $W \in {\mathcal W}$,
consider the set ${\mathcal A}_W$ of all normalized basic sequences in $W$.
Then it is easy to see that  $({\mathcal A}_W)_{W \in {\mathcal W}}$-$\SC (X,Y) = \SC (X,Y)$.
Also, $({\mathcal A}_W)_{W \in {\mathcal W}}$-$\SC_\xi (X,Y) = \SC (X,Y)$ for all $\xi <\omega_1$ thus the next
Theorem~\ref{Thm:SC} does not give new results for strictly cosingular operators.

It is obvious that for Banach spaces $X$, $Y$, a family $({\mathcal A}_i)_{i \in I}$ of sets of basic sequences in
$Y^*$ and an ordinal $1 \leq \xi < \omega_1$ we have that
$$
\text{$({\mathcal A}_i)_{i \in I}$-$\SC (X,Y)$} \subseteq \text{$({\mathcal A}_i)_{i \in I}$-$\SC _\xi(X,Y)$}.
$$
Also notice that for ordinals $1\leq \xi < \zeta < \omega_1$ we have that
$$
\text{$({\mathcal A}_i)_{i \in I}$-$\SC_\zeta (X,Y) \subseteq ({\mathcal A}_i)_{i \in I}$-$\SC_\xi (X,Y)$}.
$$
For the last inclusion, let $T \in ({\mathcal A}_i)_{i \in I}$-$\SC_\zeta (X,Y)$ and for $i \in I$
let a basic sequence $(y_n^*)_n \in {\mathcal A}_i$ and an infinite subset $N$ of positive integers such that
for all $F \subset N$ with $F \in S_\zeta$, $T^*|_{\widetilde{\Span}(y_n^*)_{n \not \in F}}$ is not an isomorphism.
There exists an infinite subset $M$ of $N$ such that for every $F \subset M$ with $F \in S_\xi$ we have that
$F \in S_\zeta$ (\cite{AGR}). Thus for every $F \subset M$ with $F \in S_\xi$,
$T^*|_{\widetilde{\Span}(y_n^*)_{n \not \in F}}$ is not an isomorphism. Thus
$T \in ({\mathcal A}_i)_{i \in I}$-$\SC_\xi (X,Y)$.

The following result is the analogue of Theorems~\ref{Thm:ADST} and \ref{Thm:SS} for $({\mathcal A}_i)_{i \in I}$ strictly cosingular operators.

\begin{thm} \label{Thm:SC}
Let $X$ and $Y$ be Banach spaces such that $Y$ is separable, and $({\A _i})_{i \in I}$ be a family
an analytic sets of basic sequences in $Y^*$.  Then
$(\A_i)_{i \in I}$-$\SC (X,Y) = \bigcap_{\xi < \omega_1} ( \A_i )_{i \in I}$-$\SC_\xi (X,Y)$.
\end{thm}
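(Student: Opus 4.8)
The inclusion $(\A_i)_{i\in I}$-$\SC(X,Y)\subseteq\bigcap_{\xi<\omega_1}(\A_i)_{i\in I}$-$\SC_\xi(X,Y)$ is already recorded above, so the plan is to prove the reverse inclusion by contraposition. Assume $T\notin(\A_i)_{i\in I}$-$\SC(X,Y)$ and fix once and for all an index $i_0\in I$ witnessing this failure; thus for every $(y_n^*)_n\in\A_{i_0}$ and every infinite $N\subseteq\N$ there is a finite $F\subseteq N$ with $T^*|_{\widetilde{\Span}(y_n^*)_{n\notin F}}$ an isomorphism. I will produce a single $\xi<\omega_1$ with $T\notin(\A_i)_{i\in I}$-$\SC_\xi(X,Y)$, which (since that class demands a witness at every index) may again be checked at $i_0$ alone.

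Call a pair $\bigl((y_n^*)_n,F\bigr)$, with $F\subseteq\N$ finite, a \emph{killer} if $T^*|_{\widetilde{\Span}(y_n^*)_{n\notin F}}$ is an isomorphism. Enlarging $F$ only shrinks the weak$^*$-closed span, so killing is preserved under enlarging $F$; equivalently, failure to be a killer passes to subsets. Since $Y$ is separable, $(Ba(Y^*),\text{weak}^*)$ is compact metrizable and $\Y=(Ba(Y^*),\text{weak}^*)^{\N}$ is Polish, and as in the proof of Theorem~\ref{Thm:SS} the set $\B$ of normalized basic sequences is Borel in $\Y$. I then define
\[
\T=\bigl\{\,\bigl((y_k^*)_k,\ell_1<\cdots<\ell_n\bigr):(y_k^*)_k\in\A_{i_0},\ \{\ell_1,\dots,\ell_n\}\text{ is not a killer}\,\bigr\},
\]
coded on $\N\times\Y$ with $(y_k^*)_k$ held constant along each branch exactly as in Theorem~\ref{Thm:SS}. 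By the monotonicity above $\T$ is a tree, and an infinite branch would give $(y_k^*)_k\in\A_{i_0}$ and an infinite $N=\{\ell_1<\ell_2<\cdots\}$ none of whose finite subsets is a killer, contradicting the choice of $i_0$; hence $\T$ is well founded.

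The heart of the argument, and the step I expect to be the main obstacle, is verifying that $\T$ is analytic, so that Theorem~\ref{pwf} applies. The favourable inputs are clear: $\A_{i_0}$ is analytic by hypothesis, and $T^*$ is an adjoint, hence weak$^*$-to-weak$^*$ continuous, so $w\mapsto\|T^*w\|$ is a supremum of the weak$^*$-continuous maps $w\mapsto|\langle Tx,w\rangle|$ over $x\in Ba(X)$ and is therefore weak$^*$ lower semicontinuous and Borel on $\Y$. The delicate point is to express ``$\bigl((y_n^*)_n,F\bigr)$ is not a killer'', namely $\forall k\,\exists w\in\widetilde{\Span}(y_n^*)_{n\notin F}$ with $\|w\|=1$ and $\|T^*w\|<1/k$, as an analytic condition. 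The naive reading quantifies existentially over $w$ in the weak$^*$-closed span, and membership in that span is a priori only co-analytic, which would push the whole condition to $\mathbf\Sigma^1_2$; unlike Theorem~\ref{Thm:SS}, one cannot retreat to the norm-dense linear span, since the weak$^*$-closure genuinely creates unit vectors of small image. My plan is to exploit the separability of $Y$: weak$^*$ convergence is metrizable on bounded sets, so I would replace membership in $\widetilde{\Span}(y_n^*)_{n\notin F}$ by weak$^*$-approximation, subject to a uniform norm bound, by the countably many rational linear combinations of $(y_n^*)_{n\notin F}$, thereby reducing ``not a killer'' to a condition using only countable quantifiers over Borel sets. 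Securing the required uniform norm bound on the approximants (a Krein--Smulian type statement in the separable predual) is precisely where the separability hypothesis must be used and where the care of the proof concentrates.

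Granting analyticity, Theorem~\ref{pwf} yields $\eta:=h(\T)<\omega_1$. Choose $\xi<\omega_1$ with $\omega^{\xi}>\eta$; I claim $T\notin(\A_i)_{i\in I}$-$\SC_\xi(X,Y)$, again witnessed at $i_0$. If not, there would exist $(y_n^*)_n\in\A_{i_0}$ and an infinite $N$ such that $T^*|_{\widetilde{\Span}(y_n^*)_{n\notin F}}$ is not an isomorphism for every $F\in S_\xi$ with $F\subseteq N$; that is, every such $F$ is a non-killer. Since $S_\xi$ is hereditary, the assignment $F\mapsto\bigl((y_k^*)_k,F\bigr)$ embeds $\{F\in S_\xi:F\subseteq N\}$ order-isomorphically into $\T$, whence $h(\T)\ge h\bigl(\{F\in S_\xi:F\subseteq N\}\bigr)=\omega^{\xi}$, using the standard fact that the Schreier index is unchanged upon restricting $S_\xi$ to an infinite set. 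This contradicts $\omega^{\xi}>\eta$, so $T\notin(\A_i)_{i\in I}$-$\SC_\xi(X,Y)$, completing the contrapositive. Thus the whole proof rests on the analyticity of $\T$ discussed in the previous paragraph.
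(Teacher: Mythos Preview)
Your overall architecture is exactly the paper's: build the tree of ``non-killers'' on $\N\times\Y$ with the basic sequence held constant along branches, show it is analytic, and then invoke Theorem~\ref{pwf} together with $h(S_\xi)=\omega^\xi$. Whether one argues directly (as the paper does: from $T\in\bigcap_\xi$ deduce $h(\widetilde\T)=\omega_1$, hence an infinite branch) or by your contrapositive is immaterial. You also correctly isolate the single nontrivial step, analyticity of $\T$, and correctly diagnose why it is delicate: the existential witness $w$ lives in $\widetilde{\Span}(y_n^*)_{n\notin F}$, and membership there is not obviously Borel.

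Where your proposal diverges from the paper, and where it has a genuine gap, is the mechanism you suggest for analyticity. The paper does \emph{not} try to reduce ``not a killer'' to countable quantifiers; instead it keeps the witness $y^*$ as a free variable ranging over the Polish space $(Ba(Y^*),w^*)$, shows that the relation
\[
\mathcal C^2=\{((\ell_i,(y_k^*)_k)_{i=1}^n,y^*): y^*\in\widetilde{\Span}(y_k^*)_{k\notin\{\ell_1,\dots,\ell_n\}}\}
\]
is Borel in $\R\times Ba(Y^*)$, intersects with the Borel norm condition $\|T^*y^*\|<\e\|y^*\|$, and then \emph{projects out} $y^*$ to obtain analyticity. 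The Borelness of $\mathcal C^2$ is obtained via the Effros--Borel space $F(Y^*)$: the map $\psi((\ell_i,(y_k^*)_k)_i,y^*)=\{z^*-y^*:z^*\in\widetilde{\Span}(y_k^*)_{k\notin\{\ell_i\}}\}$ is shown to be continuous for the lower Vietoris subbase (the crucial observation being that for weak$^*$ open $U$ one may replace $\widetilde{\Span}$ by $\Span$ in the hit-condition), and $\mathcal C^2=\psi^{-1}(\{F:0\in F\})$. No norm bound on approximants is ever needed.

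Your route, by contrast, proposes to replace $w\in\widetilde{\Span}$ by weak$^*$ approximation from rational combinations \emph{with a uniform norm bound}, and to secure that bound via Krein--Smulian. This is the step you flag as ``where the care concentrates'', and it is indeed problematic: Krein--Smulian tells you a subspace is weak$^*$ closed iff its intersection with every ball is weak$^*$ closed, but it does \emph{not} say that a unit vector in $\widetilde{\Span}(y_k^*)$ is a weak$^*$ limit of \emph{bounded} elements of $\Span(y_k^*)$; weak$^*$ dense subspaces need not be norming. Even if bounded approximants existed, the quantity $\|T^*(\cdot)\|$ is only weak$^*$ lower semicontinuous, so approximants to a $w$ with small $\|T^*w\|$ need not themselves have small $T^*$-image, which blocks the direct ``countable quantifier'' reduction you envisage. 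The fix is precisely the paper's: do not approximate $w$; carry $w$ as an extra Polish coordinate and project.
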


\begin{proof}
It suffices to show that $\bigcap_{\xi < \omega_1} ( \A_i)_{i \in I}$-$\SC_\xi (X,Y) \subset ( \A_i )_{i \in I}$-$\SC (X,Y)$.
Fix an operator $T \in \bigcap_{\xi < \omega_1} ( \A_i )_{i \in I}$-$\SC_\xi (X,Y)$ and $i_0 \in I$.
As in the proof of Theorem~\ref{Thm:SS}, let $\B$ denote the set of normalized basic
sequences in $Y^*$, $Ba(Y^*)$ denote the unit ball of $Y^*$, $\Y$ denote the Polish space $Ba(Y^*)^\N$ with the product
of the weak$^*$ topology.
Let $\K$ be the Polish space  $\N \times \Y$.  Recall, that $\K^{<\N}$ denotes the direct sum of the 
spaces $\K^n$ (for $n \in \N$) and a tree on $\K$ will be a subset of $\K^{<\N}$.  Define a tree $\R$ on $\K$
as follows:
$$
\R = \{ ( \ell_i , (y^*_{i,k})_k )_{i=1}^n \in \K^{<\N} :  (\ell_1 < \cdots < \ell_n) \in \N^{<\N}, 
(y^*_{1,k})_k=  \cdots =  (y^*_{n,k})_k \in \B \} .
$$
\noindent
Note that a generic element of $\R$ will look like $(\ell_i, (y^*_k)_k)_{i=1}^n$ where $n \in \N$, 
$(\ell_1< \ell_2< \cdots < \ell_n) \in \N^n$ and $(y_k^*)_k \in \B$.
It has been shown in the proof of Theorem~\ref{Thm:SS} that  $\B$ is a Borel subset of $\Y$. 
Thus $(\N \times \B)^{<\N }$ is a Borel subset of $\K ^{< \N } = (\N \times \Y )^{ < \N }$. Obviously  $\R$ 
is a closed subset of $( \N \times \B )^{< \N }$ thus it is a
Borel subset of $\K^{<\N}$.  Define the following two subtrees of $\R$.
$$
\T = \{ (\ell_i, (y^*_{k})_k )_{i=1}^n \in \R :   ~ T^*|_{\widetilde{\Span}(y_k^*)_{k \not\in \{ \ell_1, \ldots , \ell_n \} }} 
\mbox{ is not an isomorphism} \} ,
$$
$$
\T_{i_0} = \{ (\ell_i, (y^*_{k})_k )_{i=1}^n \in \R : (y_k^*)_k \in \A_{i_0} \} .
$$
\noindent Both $\T$ and $\T_{i_0}$ are trees on $\K$.  The main component of the proof will be to show that
$\widetilde{\T}=\T\cap \T_{i_0}$ is an analytic subtree of $\R$, and thus an analytic tree on $\K$.

In order to see that $\T_{i_0}$ is an analytic tree, define the map $\pi : \R \to \B$ by 
$\pi ( (\ell_i, (y^*_k)_k)_{i=1}^n) = (y_k^*)_k$ where $((\ell_i, (y^*_k)_k)_{i=1}^n)$ is a generic element of $\R$. 
Obviously this map is continuous, and $\T_{i_0} = \pi^{-1} (\A_{i_0})$. Since $\A_{i_0}$ is an analytic set,
we obtain that $\T_{i_0}$ is analytic (see e.g. \cite[Proposition~$14.3$(ii)]{K}). 

To demonstrate that $\T$ is analytic,  we wish to write it as a countable
intersection of analytic sets.  Let,

\begin{equation*}
\begin{split}
\mathcal{C}_\e = \{ ((\ell_i, (y^*_{k})_k )_{i=1}^n, y^*) \in &~ \R \times (Ba(Y^*),\mbox{weak}^*\mbox{ topology}) :
y^* \in \widetilde{\Span}(y_k^*)_{k \not\in \{\ell_1, \ldots , \ell_n\}} \\
&  ~\mbox{and}~ \| T^* y^* \| < \e \|y^*\| \}.
\end{split}
\end{equation*}

\noindent  In this definition, as well as the rest of the proof, $Ba (Y^*)$ will be equipped with the weak$^*$ topology.
Clearly, $\T = \bigcap_{\e \in \Q^+} \proj_\R \mathcal{C}_\e$ where $\proj_\R$ denotes the projection to $\R$.  
Since $\R$ is a Borel subset of $\K ^{<\N}$ there exists a finer Polish topology on $\K^{<\N}$ making $\R$ a clopen set
and having the same Borel sets as the ordinary topology of $\K ^{<\N}$ (\cite[Theorem 13.1]{K}). Considering this finer
topology on $\R$ we have that $\R \times (Ba (Y), \mbox{ weak}^*\mbox{ topology})$ is Polish. Thus once we show that 
$\mathcal{C}_\e$ is Borel we obtain that $\proj_\R \mathcal{C}_\e$ is analytic (\cite[Exercise 14.3]{K}) hence $\T$ is 
analytic. Decompose $\mathcal{C}_\e$ as the intersection of the following two sets,
$$
\mathcal{C}_\e^1 = \{ ( (\ell_i,(y_k^*)_k)_{i=1}^n, y^*) \in \R \times Ba(Y^*), : \| T^* y^* \| < \e \|y^*\| \} ,
$$
$$
\mathcal{C}^2 = \{ ( (\ell_i,(y_k^*)_k)_{i=1}^n, y^*) \in \R \times Ba (Y^*) : y^* \in \widetilde{\Span}(y_k^*)_{k \not\in \{\ell_1, \ldots , \ell_n\}} \} .
$$
\noindent For $ (( \ell_i ,(y_k^*)_k)_{i=1}^n, y^*) \in \R \times Ba (Y^*)$ notice that 
$$
(( \ell_i ,(y_k^*)_k)_{i=1}^n, y^*) \in \mathcal{C}^1_\e \Leftrightarrow \forall r \in \Q^+,~ \|y^* \| > r ~\mbox{or}~ \| T^* y^* \| \leq \varepsilon r . 
$$
\noindent Hence, $\mathcal{C}_\e^1$ is a Borel subset of $\R \times Ba (Y^*)$ (by the weak$^*$ lower semicontinuity of $\| \cdot \|$).

Now we demonstrate that ${\mathcal C}^2$ is a Borel set. Let $F(Y^*)$ denote the topological space of the weak$^*$ closed subsets of 
$Y^*$. This is called ``the Effros-Borel space of the topological space $(Y^*, \mbox{weak}^*\mbox{ topology})$'', (see \cite[section~$12$.C]{K}),
and its basic open sets have the form
$V_U:= \{ F \in F(Y^*): F \cap U \not = \emptyset \}$ (where $U$ is an arbitrary weak$^*$ open subset of $Y^*$). 
Define the map, $\psi : \R \times Ba (Y^*) \rightarrow F(Y^*)$, by
$$
\psi((i,(y^*_n)_n)_{i \in I},y^*) = \{ z^* - y^* : z^* \in \widetilde{\Span}(y^*_n)_{n \not\in I} \} .
$$

\noindent We claim that $\psi$ is continuous.  Indeed, fix a weak* open subset $U$ in $Y^*$ and notice that,

\begin{equation*}
\begin{split}
\psi^{-1}(V_U) & = \{  (( i ,(y_k^*)_k)_{i \in I},y^*) \in \R \times Ba (Y^*): \{ z^*-y^*: z^* \in \widetilde{\Span}(y^*_n)_{n \not\in I} \}  \cap U \not= \emptyset \} \\
& = \{ ( (i,(y_k^*)_k)_{i \in I},y^*) \in \R \times Ba (Y^*) : \{ z^*-y^*: z^* \in \Span (y^*_n)_{n \not\in I}\} \cap U \not= \emptyset \} ,
\end{split}
\end{equation*}
(since $U$ is weak$^*$ open, 
$\Span (y^*_n)_{n \not\in I} \cap (y^*+U) \not = \emptyset \Leftrightarrow \widetilde{\Span}(y^*_n)_{n \not\in I} \cap (y^* +U) \not = \emptyset$). 
Thus 
$$
\psi^{-1}(V_U) = \bigcup\limits_{I \in \N^{<\N},~(a_i) \in {\mathbb R}^{<\N}} {\mathcal C}_{I, (a_i)}
$$
where for fixed $I \in \N^{<\N}$ and $(a_i) \in {\mathbb R}^{<\N}$,
$$
{\mathcal C}_{I, (a_i)}:= \{  ((i,(y_k^*)_k)_{i \in I},y^*) \in \R  \times Ba(Y^*):   \sum_{i \not\in I} a_i y_i^*-y^* \in U \} .
$$
Since $U$ is a weak$^*$ open subset of $Y^*$, ${\mathcal C}_{I, (a_i)}$ is an open subset of $\R  \times Ba(Y^*)$, showing that $\psi$ is continuous.
Since $Y$ is separable, $\{ 0 \} $ is a weak$^*$ closed set thus the set $\{F \in F(Y^*): 0 \in F\}$ is $G_\delta$  in $F(Y^*)$ and therefore
$$
\mathcal{C}^2 = \psi^{-1}(\{F \in F(Y^*): 0 \in F\}) = \{ ((i,(y_k^*)_k)_{i \in I}, y^*) \in \R \times B^{w^*} : y^* \in \widetilde{\Span}(y^*_n)_{n \not\in I} \}
$$
\noindent  is Borel. 

Having determined that $\mathcal{C}^2$ and $\mathcal{C}_\e^1$ are both Borel, it follows that $\mathcal{C}_\e$ is Borel for each $\e \in \Q^+$.  
Finally $\widetilde{\T}$ is an analytic tree on $\K$.  

By our assumption, for each $\xi < \omega_1$ there exist a $(y^*_n) \in \A_{i_0}$ and an infinite subset 
$N$ of $\N$ such that for any $(\ell_1< \ldots < \ell_n) \subset N$ with $(\ell_1, \ldots, \ell_n) \in S_\xi$, $ (\ell_i,(y_k^*)_k)_{i=1}^n \in \widetilde{\T}$.  
The subtree $\widetilde{\T}_{(y_n^*)_n, N}$ of $\widetilde{\T}$ containing all the nodes of the form $ (\ell_i,(y_k^*)_k)_{i=1}^n$ with 
$(\ell_1 < \ldots < \ell_n) \subset N$ and $(\ell_1, \ldots, \ell_n) \in S_\xi$ is order isomorphic to $S_\xi$.  Thus, for each $\xi$ the height of the tree 
$\widetilde{\T}$ is greater that or equal to $h(S_\xi)=\omega^\xi$.  Whence, $h(\widetilde{\T})=\omega_1$.  By Theorem \ref{pwf},  conclude that 
$\widetilde{\T}$ is not well founded.  Let $(y^*_n)_n \in \A_{i_0}$ and $(\ell_i)_{i=1}^\infty$ be an increasing sequence of 
positive integers such that for all $m \in \N$,  $ ( \ell_i ,(y_k^*)_k)_{i=1}^m \in \widetilde{\T}$.  Therefore, if $F$ is any finite subset of 
$(\ell_i)_{i=1}^\infty \subset \N$, $T^*|_{\widetilde{\Span}(y^*_{n})_{n \not\in F}}$ is not an isomorphism.  Since the above is true for all $i_0 \in I$, 
$T \in (\A _i)_{i \in I}$-$\SC (X,Y)$.
\end{proof}

We now give some examples of analytic sets of basic sequences in a dual Banach space $Y^*$ for separable $Y$. Such sets can be used for 
$\A_i$ in defining $(\A_i)_{i \in I}$-$\SC (X,Y)$. Bossard, in \cite{B}, shows that for a separable Banach space $Z$, the set of all boundedly complete and 
the set of all shrinking basic sequence is coanalytic (the complement of an analytic set) non-Borel as a subset of $Z^\N$.

\begin{remark} \label{banda}
Assume that $Y$ is a separable Banach space. Let $W$ be a fixed infinite dimensional weak$^*$ closed subspace of $Y^*$, 
$(z_n^*)$ be a fixed basic sequence in $Y^*$ and $\A$ be a fixed analytic set of basic sequences in $Y^*$. Let $\Y$ be the Polish space
$(Ba (Y^*), \mbox{ weak}^* \mbox{ topology})^{\N}$.
\begin{enumerate}
\item The set $\B_W$, of normalized basic sequences in $W$ is a Borel subset of $\Y$.
\item If $W$ has infinite codimension in $Y^*$ then the set $\B_{W^c}$ of all normalized basic sequences $(y_n^*)$ in $Y^*$ 
with $y_n^* \not \in W$ for all $n \in \N$, is a Borel subset of $\Y$.
\item The set $\B_{u}$, of all unconditional basic sequences is a Borel subset of $\Y$.
\item The set $\A^{sub}_{(z_n^*)}$ of all subsequences of $(z_n^*)$ is an analytic subset of $\Y$.
\item The set $\A^{block}_{(z_n^*)}$ of all block sequences of $(z_n^*)$ is an analytic subset of $\Y$.
\item The set $\A_{nbc}$, of all non-boundedly complete basic sequences in $Y^*$ is an  analytic subset of $\Y$.
\item The set $\A_{bo}$, of all sequences in $\A$ with biorthogonal vectors in $Y$ is an analytic subset of $\Y$.
\end{enumerate} 
\end{remark}

\begin{proof}
\noindent (1) is proved in Theorem~\ref{Thm:SS}. (2) is obvious since $Y^* \backslash W$ is a weak$^*$ open set.

\noindent (3) This follows from writing $\B_{u}$ as,
\begin{equation*}
\begin{split}
\B_{u} & = \{ (y_n^*)_n \in \B : \exists k \in \N, \forall (a_i)_i \in \Q^{<\N}~ \forall F ~\mbox{finite subset of}~ \N,  
          \| \sum_{i \in F} a_i y_i^* \| < k \| \sum_i a_i y_i^* \| \} \\
& = \bigcup_{k \in \N} \bigcap_{(a_i)_i \in \Q^{<\N}} \bigcap_{\substack{F \subset \N \\ 
finite}}  \{ (y_n^*)_n \in \B : \forall r \in \Q^+,  \| \sum_i a_i y_i^* \| > r  ~ \mbox{or}~ \| \sum_{i \in F} a_i y_i^* \| \leq kr \} .
\end{split}
\end{equation*}

\noindent (4) We have that $(w_n^*) \in \A^{sub}_{(z_n^*)}$ if and only if there exists $k_1<k_2< \cdots$ an increasing sequence 
of positive integers such that $w_n^* = z_{k_n}^*$ for all $n \in \N$. The set $[ \N ]^{\N}$ of the increasing sequences of integers endowed with the
coordinate-wise convergence is a Polish space, thus the result follows from \cite[Exercise~$14.3$ (ii)]{K}.

\noindent (5) We have that $(w_n^*) \in \B^{block}_{(z_n^*)}$ if and only if there exists $k_1<k_2< \cdots $ an increasing sequence of 
integers and $(a_i) \in {\mathbb R}^{\N}$ such that $w_n^* = \sum\limits_{i=k_n}^{k_{n+1}-1}a_i z_i^*$. Since $[ \N ]^{\N} \times {\mathbb R}^{\N}$
endowed with the coordinate-wise convergence is a Polish space, the result follows from \cite[Exercise~$14.3$ (ii)]{K}.

\noindent (6)  Set $\B = \B _{Y^*}$ (see part (1)) and 
$$
\mathcal{C}_{nbc} := \{ ((y_n^*)_n,(a_n)_n) \in \B \times \mathbb{R}^\N :  \exists M , \sup_{N} \| \sum_{n=1}^N a_n y^*_n \| \leq M \&
~ \bigg(\sum_{n=1}^N a_n y^*_n \bigg)_N ~\mbox{is not Cauchy} \} .
$$
\noindent  Observe that $\A_{nbc}=\proj_\B \mathcal{C}_{nbc}$ (the projection on $\B$).    Therefore, it is enough to show that 
$\mathcal{C}_{nbc}$ is a Borel subset of $\B \times \mathbb{R}^\N$ (since we can assume that $\B$ is a Polish space by \cite[Theorem 13.1]{K}).  
For each $M \in \N$, the set
$$
\mathcal{C}^M = \{ ((y_n^*)_n,(a_n)_n) \in \B \times \mathbb{R}^\N :  \sup_{N} \| \sum_{i=n}^N a_n y^*_n \| \leq M \} 
$$
\noindent is a closed in $\B \times \mathbb{R}^\N$.  Thus $\bigcup_{M \in \N} \mathcal{C}^M$ is Borel.  Additionally,

\begin{equation*}
\begin{split}
\mathcal{D}= \{ ((y_i^*)_i,&(a_i)_i) \in \B \times \mathbb{R}^\N :  (\sum_{i=1}^N a_i y^*_i )_N ~\mbox{is not Cauchy} \} \\
&  = \{ ((y_i^*)_i,(a_i)_i) \in \B \times \mathbb{R}^\N :  \exists r \in \Q^+, \forall n \in \N, \exists ~n < m \in \N , ~  \|\sum_{i=n}^m a_i y^*_i \| > r  \} \\
& = \bigcup_{r \in \Q^+} \bigcap_{n \in \N} \bigcup_{n<m \in \N}  \{ ((y_i^*)_i,(a_i)_i) \in \B \times \mathbb{R}^\N :  \|\sum_{i=n}^m a_i y^*_i \| > r  \}
\end{split}
\end{equation*}

\noindent is Borel. Thus, $\mathcal{C}_{nbc}=\mathcal{D} \bigcap (\bigcup_{M \in \N} \mathcal{C}^M)$ is Borel.\\

\noindent (7) Set $ \mathcal{C}_{bo}:= \{((y_n^*)_n, (y_n)_n) \in \Y \times Y^\N : y^*_k(y_n) = \delta_{k,n} \}$. Then
$\mathcal{C}_{bo}$ is a closed, and hence trivially a Borel subset of $\Y \times Y^\N$.
$\A_{bo}$ is analytic since $\A_{bo}=\proj_\Y \mathcal{C}_{bo}$ (the projection on $\Y$). \\
\end{proof}

Finally we present an example in order to illustrate Theorem~\ref{Thm:SC}. Let $D$ be the dyadic tree
$\{ \emptyset \} \cup \{0,1\}^{< \N}$ and consider the space $X= \ell_1 (D)$. Let $\{e_\alpha : \alpha \in D \}$ denote the
basis of X which is ordered as: $e_\emptyset, e_0, e_1, e_{0,0}, e_{0,1},
e_{1,0}, e_{1,1}, e_{0,0,1}$, etc. Fix $\omega_1$ many infinite branches of
$D$, and enumerate them as $\{ b_\xi: \xi < \omega_1 \}$. For fixed $\xi < \omega_1$, let $(e_{\xi, n})_n$ be the
increasing enumeration of the set $\{e_\alpha: \alpha \in b_\xi \}$. Also
for every $\xi < \omega_1$ let $F_\xi$ be a fixed finite set which does not belong in
$S_\xi$. Define $T\in \L (X)$  to be equal to $0$ on $e_{\xi , n}$ for every $\xi$ and every
$n \in F_\xi$. Also define $T$ to be the identity on the rest of the basis of
$X$. Then the biorthogonal functional of the basis become a weak$^*$ basis for $X^*$ (see \cite{JR}), 
 and $T^*$ is $0$ on $e_{\xi, n}^*$ (the biorthogonal functional of $e_{\xi , n}$) for every $\xi$ and
every $n \in F_\xi$. Also $T^*$ is identity on the rest of the weak$^*$ basis.
Now let ${\mathcal A}$ denote the set of subsequences of $\{e_\alpha^*: \alpha
\in D \}$ which are infinite branches of $D$ ($e_\alpha^*$ denotes the biorthogonal functional of $e_\alpha^*$ for all $\alpha \in D$). 
This is a closed subset $(Ba(X^*), weak^*)^{\mathbb N}$ (this is because each ``level'' of $D$ has finitely many nodes thus finite many possible limits). Since $F_\xi$ is
not in $S_\xi$ we have that $T \in {\mathcal A}$-$\SC_\xi (X,X)$ for all $\xi$. Thus Theorem~\ref{Thm:SC} gives the existence of an infinite 
branch $b$ of $D$ and an infinite subset $N$ of $b$ such that for all finite subsets $F$ of $N$ we have that 
$T^*|_{\widetilde{\Span} (e_\alpha)_{\alpha \in b \backslash F}}$ is not an isomorphism.

\end{document}